\numberwithin{equation}{section}
\renewcommand*\env@matrix[1][*\c@MaxMatrixCols c]{%
 \hskip -\arraycolsep
 \let\@ifnextchar\new@ifnextchar
 \array{#1}}
\theoremstyle{plain}
\newtheorem{theorem}{Theorem}
\newtheorem{corollary}[theorem]{Corollary}
\newtheorem{proposition}[theorem]{Proposition}
\newtheorem{lemma}[theorem]{Lemma}
\newtheorem{question}[theorem]{Question}
\theoremstyle{definition}
\newtheorem{definition}[theorem]{Definition}
\newtheorem{example}[theorem]{Example}
\theoremstyle{remark}
\newtheorem{remark}[theorem]{Remark}
\DeclareMathOperator{\rank}{rank}
\newcommand{\bd}{\partial}
\newcommand{\RR}{\mathbb{R}}
\newcommand{\ZZ}{\mathbb{Z}}
\newcommand{\mO}{\mathcal{O}}
\newcommand{\al}{\alpha}
\newcommand{\be}{\beta}
\newcommand{\la}{\lambda}
\newcommand{\La}{\Lambda}
\newcommand{\Om}{\phi}
\newcommand{\Id}{\textrm{Id}}
\newcommand{\bR}{\overline{R}}
\newcommand{\ds}{\displaystyle}
\DeclareMathOperator{\im}{Im}
\DeclareMathOperator{\coker}{coker}
\DeclareMathOperator{\sign}{sign}
\begin{document}

\title{Directed rooted forests in higher dimension}

\author{Olivier Bernardi}
\address{Department of Mathematics, Brandeis University}
\email{bernardi@brandeis.edu}
\author{Caroline J.\ Klivans}
\address{Departments of Applied Mathematics and Computer Science, Brown University}
\email{klivans@brown.edu}
\date{\today}
\subjclass[2010]{%
05C05, 
05C50, 
05E45} 

\begin{abstract}
For a graph $G$, the generating function of rooted forests, counted by the number of connected components, can be expressed in terms of the eigenvalues of the graph Laplacian. We generalize this result from graphs to cell complexes of arbitrary dimension. This requires generalizing the notion of rooted forest to higher dimension. We also introduce orientations of higher dimensional rooted trees and forests.  These orientations are discrete vector fields which lead to open questions concerning expressing homological quantities combinatorially. 
\end{abstract}
\maketitle

\section{Introduction}
The celebrated matrix-tree theorem gives a determinantal formula for the number of spanning trees of a graph. Specifically, for a graph $G$, the number $\tau(G)$ of spanning trees is expressed in terms of the Laplacian matrix $L_G$ by:
\begin{equation} 
\tau(G)=\det(L_G^v), \label{eq:tree-graph} 
\end{equation}
where $L_G^v$ is the matrix obtained from $L_G$ by deleting the row and column corresponding to a vertex~$v$ (any vertex gives the same determinant). One of the known generalizations of the matrix-tree theorem is a counting formula for \emph{rooted forests} (acyclic subgraph with one marked vertex per component). Namely, the generating function of the rooted forests of a graph $G$, counted by the number of connected components, is 
\begin{equation} 
\sum_{F \subseteq G} x^{\kappa(F)}=\det(L_G+x\,\Id)=\prod_{\lambda} (x + \lambda), \label{eq:forest-graph} 
\end{equation} 
where the sum is over all rooted forests of $G$, the product is over the eigenvalues of $L_G$, and $\kappa(F)$ denotes the number of connected components of the forest $F$. This formula appears, for instance, in \cite{Spectra-of-graphs}, where it is credited to Kelmans.
Note that~\eqref{eq:forest-graph} implies in particular that the number of \emph{rooted} spanning trees of $G$ is 
$$[x^1]\det(L_G+x\,\Id)=\sum_{v\in V}\det(L_G^v)=|V|\det(L_G^v),$$
hence the number of unrooted spanning trees is $\det(L_G^v)$, thereby recovering~\eqref{eq:tree-graph}.

Building on work of Kalai~\cite{Kalai}, the previous works~\cite{Adin, DKM, DKM2, Lyons} gave generalizations of the matrix-tree theorem~\eqref{eq:tree-graph} for higher dimensional analogues of graphs, namely simplicial complexes; see also the survey~\cite{summary}. More specifically, these works introduced the notion of spanning trees (and more generally maximal spanning forests) for simplicial complexes (and more generally cell complexes). These higher dimensional \emph{spanning trees} are the subcomplexes that satisfy certain properties generalizing the graphical properties of being acyclic, connected, and having one less edge than the number of vertices.  It has been shown that the matrix-tree theorem~\eqref{eq:tree-graph} generalizes to this higher dimensional setting.  An important difference in higher dimensions, however, is that the quantity $\tau(G)$ in~\eqref{eq:tree-graph}  has to be understood as a \emph{weighted count} of spanning trees. Each spanning tree $T$ of $G$ is counted with multiplicity equal to its homology cardinality $|H(T)|$ squared. Kalai was the first to understand the significance of this weighted count~\cite{Kalai}, and in particular he derived the following higher dimensional analogue of Cayley's theorem for the complete simplicial complex $K_{n}^d$ of dimension $d$ on $n$ vertices:
$$\tau(K_{n}^d) = n^{ n-2 \choose d}.$$
No closed formula is known for the \emph{unweighted count} of the spanning trees of $K_{n}^d$.


The first goal of this paper is to generalize~\eqref{eq:forest-graph} to simplicial complexes of arbitrary dimension; see Theorem~\ref{thm:main}. 
This generalization requires defining the higher dimensional analogue of a \emph{rooted forest}. Unlike in the graphical case (i.e. dimension 1), the rooted forests are not disjoint unions of rooted trees. Moreover, the \emph{root} of a forest is a non-trivial structure (essentially an entire co-dimension one tree). Lastly, the enumeration is again weighted by homological quantities. For instance, for the complete complex $K_{n}^d$ our result gives
\begin{equation}\label{eq:complete}
\sum_{(F,R)}|H_{d-1}(F,R)|^2 x^{|R|} = x^{n-1\choose d-1}(x+n)^{ n-1 \choose d}, 
\end{equation}
where the sum is over the pairs $(F,R)$ where $F$ is a forest of $K_n^d$, $R$ is a root of $F$, and $|H_{d-1}(F,R)|$ is the cardinality of the relative homology group (see Section~\ref{sec:rooted-forests} for definitions). We also show that other generalizations of the matrix-tree theorem such as those for directed weighted graphs have analogues in higher dimension.

Our second goal is to define the notion of \emph{fitting orientation} for rooted forests of a complex. In dimension~1, the unique fitting orientation of a rooted forest is the orientation of its edges making each tree oriented towards its root. In higher dimension, a fitting orientation of a forest is a bijective pairing between the facets and the non-root ridges, see Figure~\ref{example-orients-intro}. Interestingly, in dimension greater than~1, there can be several fitting orientations of a rooted forest $(F,R)$. In fact, we show that the number of fitting orientations of $(F,R)$ is at least $|H_{d-1}(F,R)|$, and show how to express $|H_{d-1}(F,R)|$ as a signed count of the fitting orientations of $(F,R)$. We consider, but leave open, the possibility of expressing $|H_{d-1}(F,R)|$ as an unsigned count of certain fitting orientations.

\begin{figure}[h]
\includegraphics[width=.5\linewidth]{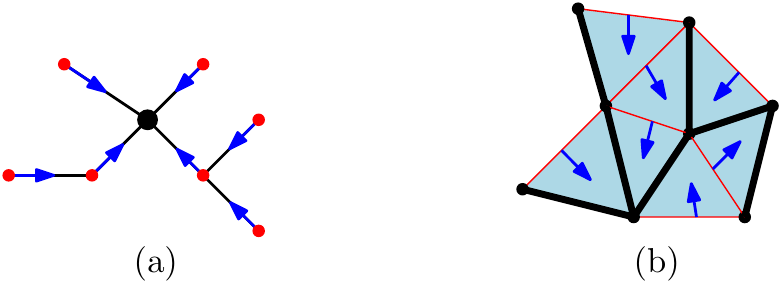}
\caption{The fitting orientations of a graphical rooted tree and a two dimensional rooted tree.}
\label{example-orients-intro}
\end{figure}

Our last goal is of pedagogical nature. We aim to give a self-contained presentation of simplicial trees and forests in a way that makes them more immediately accessible to combinatorialists with little knowledge of homological algebra.

This paper is organized as follows. In Section~\ref{sec:defs} we recall and extend the definitions for forests and roots of simplicial complexes. We also give geometric interpretations of these objects (starting from linear algebra definitions). In Section~\ref{sec:rooted-forests}, we define rooted forests and their the homological weights. In Section~\ref{sec:main-results}, we establish the generalization of~\eqref{eq:forest-graph}. Lastly, in Section~\ref{sec:orientations} we define the fitting orientations of rooted forests, study their properties and conclude with some open questions.

\smallskip


\section{Forests and roots of simplicial complexes}\label{sec:defs}
In this section we review and extend the theory of higher dimensional spanning trees developed in~\cite{Adin, DKM, DKM2, Kalai, Lyons}. 
For simplicity, we will restrict our attention to simplicial complexes throughout, but all results extend more generally to cell complexes.

\subsection{Simplicial complexes}
A \emph{simplicial complex} on $n$ vertices is a collection $G$ of subsets of $[n]$ which is closed under taking subsets, that is, if $g \in G$ and $f \subset g$ then $f \in G$. Elements of $G$ are called \emph{faces}. A \emph{$k$-dimensional face}, or $k$-face for short, is a face $f$ of cardinality $k+1$. The dimension of a complex is the maximal dimension of its faces. 
Observe that \emph{graphs}\footnote{Our \emph{graphs} are finite, undirected, and have neither loops nor multiple edges.} are the same as \emph{1-dimensional simplicial complexes}: the edges are the $1$-dimensional faces, and the vertices are the $0$-dimensional faces.\footnote{Observe that simplicial complexes are special cases of hypergraphs: if the faces of a simplicial complex are seen as \emph{hyperedges} then a simplicial complex is a hypergraph whose collection of hyperedges is closed under taking subsets.}

\begin{figure}[h]
\includegraphics[width=\linewidth]{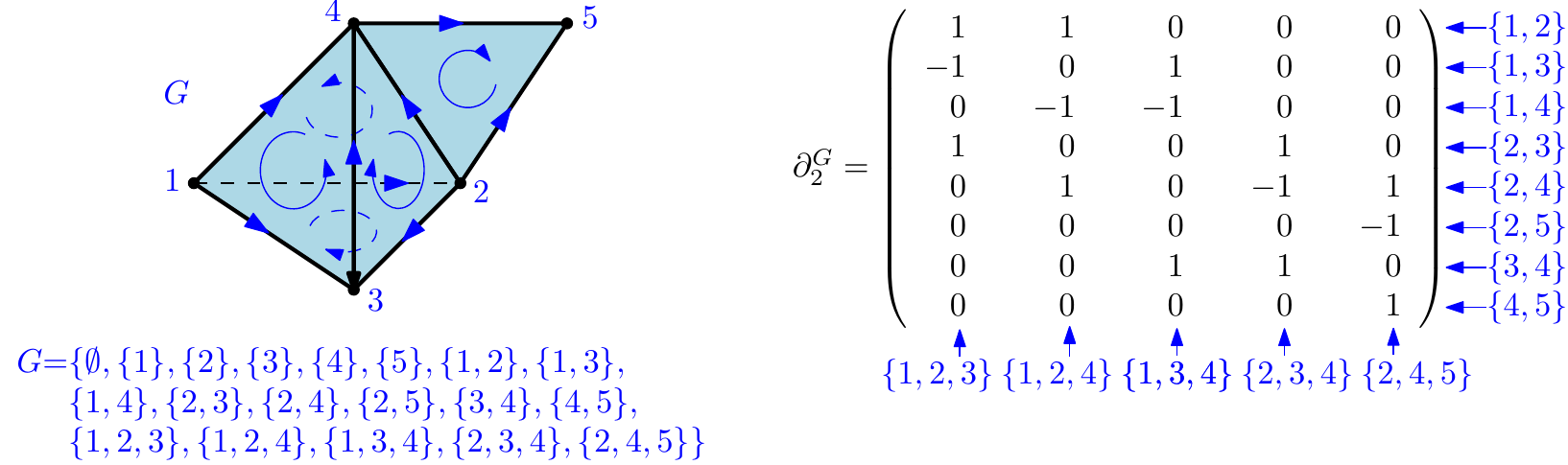}
\caption{A simplicial complex $G$ of dimension 2, and its incidence matrix $\bd_2^{G}$. The standard orientations of the 1-faces and 2-faces are indicated.}
\label{fig:oriented-incidence}
\end{figure}

Let $k\geq 0$. The $k$th \emph{incidence matrix} of a simplicial complex $G$ is the matrix $\bd_k^{G}$ defined as follows: 
\begin{compactitem}
\item the rows of $\bd_k^{G}$ are indexed by the $(k-1)$-faces and the columns are indexed by the $k$-faces, 
\item the entry $\bd_k^{G}[r,f]$ corresponding to a $(k-1)$-face $r$ and a $k$-face $f$ is $(-1)^j$ if $f=\{v_0,v_1,\ldots,v_{k}\}$ with $v_0<v_1\cdots<v_{k}$ and $r=f\setminus \{v_j\}$ for some $j$, and 0 otherwise.
\end{compactitem}
To fix a convention, we will order the rows and columns of $\bd_k^{G}$ according to the lexicographic order of the faces as in Figure~\ref{fig:oriented-incidence} (but all results are independent of this convention).

\begin{remark}
There is a natural matroid structure associated to any simplicial complex $G$. Namely, the \emph{simplicial matroid} $M_G$ of a $d$-dimensional complex $G$ is the matroid associated with the matrix $\bd_d^{G}$: the elements of $M_G$ are the $d$-faces of $G$, and the independent sets of $M_G$ are the sets of $d$-faces such that the corresponding columns of $\bd_d^{G}$ are independent. See~\cite{Cordovil} for more on this class of matroids which generalize graphic matroids.  See also~\cite{Dall} which gives a polyhedral proof of the matrix tree theorem for subclass of regular matroids.
\end{remark}

\begin{remark} Every simplicial complex has a \emph{geometric realization} in which the $k$-faces are convex hulls of $k+1$ affinely independent points. A $0$-face is a point, a $1$-face a segment, a $2$-face a triangle, etc. 
 The complex is a geometric space constructed by gluing together simplices along smaller-dimensional simplices.
 \end{remark}

A complex of dimension 2 is represented in Figure~\ref{fig:oriented-incidence}. On this figure we also show the \emph{standard orientation} of each face, that is, the orientation corresponding to the order of the vertices. The incidence matrix records whether two faces $r,f$ are \emph{incident}: $\bd_k^{G}[r,f]\neq 0$ if $r$ is on the boundary of $f$.  Furthermore, $\bd_k^{G}[r,f]=+1$ if when traversing the boundary of $f$ according to the orientation of $f$, the edge $r$ is traversed in the same direction it is oriented.  Otherwise, $\bd_k^{G}[r,f]=-1$.




\subsection{Forests of a complex}

\begin{definition} \label{def:spanning forest}
Let $G$ be a $d$-dimensional complex, and let $F$ be a subset of the $d$-faces.
We say that $F$ is a \emph{forest} of $G$ if the corresponding columns of $\bd_d^{G}$ are independent.
We say that $F$ is a \emph{spanning subcomplex} of $G$ if the corresponding columns of $\bd_d^{G}$ are of maximal rank.
Accordingly, we say that $F$ is a \emph{spanning forest} of $G$ if the corresponding columns of $\bd_d^{G}$ form a basis of the columns.
\end{definition}

\begin{example} For the complex represented in Figure~\ref{fig:oriented-incidence}, there are 4 spanning forests, which correspond to the 4 ways of deleting one of the triangles on the boundary of the tetrahedron.
\end{example}

\begin{remark}
The forests, spanning subcomplexes, and spanning forests of a complex $G$ correspond respectively to the independent sets, spanning sets, and bases of the simplicial matroid~$M_G$.
\end{remark}

We now discuss how these definitions naturally extend their counterparts in graph theory.
 In the dimension 1 case, 
$F$ is a forest if it is acyclic, and $F$ is spanning if it has the same number of components as $G$. In particular, spanning forests are subgraphs made of one spanning tree per connected component of $G$.

In higher dimensions, let $G_k$ denote the set of $k$-faces of $G$, and $\RR G_k$ the set of formal linear combinations of $k$-faces with coefficient in $\RR$. Note that $\RR G_k$ is a vector space, and that $\bd_k^{G}$ can be interpreted as the matrix of a linear map from $\RR G_k$ to $\RR G_{k-1}$. This map is called the \emph{boundary map} in simplicial topology. It is not hard to show that $\bd_{k-1}^{G}\circ \bd_k^{G}=0$.

\begin{example}
For the complex in Figure~\ref{fig:oriented-incidence}, $\bd_2^{G}(\{1,2,3\})=\{1,2\}-\{1,3\}+\{2,3\}$ and $\bd_2^{G}(\{1,2,3\}-\{1,2,4\}+\{1,3,4\}-\{2,3,4\})=0$. 
\end{example}

A \emph{$k$-cycle} of $G$ is a non-zero element of $\RR G_k$ in the kernel $\ker(\bd_k^{G})$. Informally, a $k$-cycle of $G$ is a combination of $k$-faces such that their boundaries ``cancel out'', as in the preceding example.
 Note that a $1$-cycle is a linear combination of cycles in the usual sense of graph theory, while a 2-cycle is a linear combination of triangulated orientable surfaces without boundary (i.e. spheres, and $g$-tori).
 We will say that a set $S$ of $k$-faces \emph{contains} a $k$-cycle if there exists a $k$-cycle in $\RR S$. 

 By definition, a collection $F$ of $k$-faces of $G$ is a forest if and only if $\ker(\bd_k^{F})=\{0\}$, that is, if $F$ does not contain any $k$-cycle. This is a natural generalization of the condition of being acyclic for graphs. Note here that in taking the boundary $\bd_k^F$, we are implicitly considering the simplicial complex formed from the $k$-faces of $F$ and all possible subsets of the $k$-faces.

Note that for any complex and any $k\geq 1$, the image of an element $x\in \RR G_k$ is a $(k-1)$-cycle because $\bd_{k-1}^{G}\circ \bd_k^{G}=0$. A $(k-1)$-cycle is called a \emph{$G$-boundary} if it is in the image $\im(\bd_k^{G})$, that is, if it is the boundary of a combination of $k$-faces. 


By definition, a collection $F$ of $d$-faces of $G$ is a spanning
subcomplex if and only if $\im(\bd_k^{F})=\im(\bd_k^{G})$, that is, if
any $G$-boundary is an $F$-boundary.

We claim that the above condition is the generalization of being maximally connected for subgraphs. Indeed, observe that in dimension $d=1$, $\bd_0^G(\{v\})=\emptyset$ for every vertex $v$ of $G$. Hence, a 0-cycle is a linear combination of vertices with the sum of coefficients equal to 0. Moreover a 0-cycle is a $G$-boundary if and only if  in each connected component $C$ of $G$, the sum of the coefficients of the vertices in $C$ is equal to 0. Thus, in dimension 1, the $G$-boundaries are all $F$-boundaries if and only if $F$ has the same number of connected components as $G$.

\begin{example} 
If $G$ is a triangulated torus of dimension 2, then the spanning forests are obtained from $G$ by removing any one of the triangles (hence the forests are obtained by removing any non-empty subset of triangles).
If $G$ is a triangulated projective plane (see for instance Figure~\ref{fig:rp2}), then $G$ itself is a spanning forest since it contains no 2-cycle.
\end{example}

\begin{example} Let $G$ be a triangulated sphere of any dimension $d$ (these are the the higher dimensional analogues of cycle graphs).  The spanning forests are obtained by removing exactly one of the $d$-faces, hence the forests of $G$ are obtained from $G$ by removing any non-empty subset of $d$-faces.
\end{example}


The familiar reader will note that the conditions for being a spanning forest can be efficiently expressed in terms of homology.  Spanning forests $F$ of a $d$-dimensional complex $G$ are characterized as the complexes whose maximal faces are the sets of $d$-faces satisfying any two of the three following conditions:
\begin{compactenum}
 \item[(i)] $H_d^{F} = \{0\}$ \hfill ($F$ contains no $d$-cycle),
 \item[(ii)] $\rank H_{d-1}^{F} = \rank H_{d-1}^{G}$ \hfill (any $G$-boundary is an $F$-boundary),
 \item[(iii)] $|F| = |G_d| - \rank(H_d^{G})$ \hfill (the cardinality of $F$ equals the rank of $\bd_d^{G}$).
\end{compactenum}

In dimension 1, a spanning forest is called a \emph{spanning tree} when $G$ is connected. For a $d$-dimensional complex, the condition \emph{$G$ is connected} can be generalized by the condition that any $(d-1)$-cycle is a $G$-boundary (i.e. $\ker(\bd_{d-1}^{G})=\im(\bd_d^{G})$). In this case, following~\cite{DKM}, the spanning forests are also called \emph{spanning trees} of $G$. However, unlike for graphs, spanning forests are not in general disjoint unions of spanning trees.

\subsection{Roots of a complex}
\begin{definition} \label{def:rooting}
Let $G$ be a $d$-dimensional complex. Let $R$ be a subset of the $(d-1)$-faces, and let $\bR=G_{d-1}\setminus R$.
We say that $R$ is \emph{relatively-free} if the rows of $\bd_d^{G}$ corresponding to the faces in $\bR$ are of maximal rank. 
We say that $R$ is \emph{relatively-generating} if the rows of $\bd_d^{G}$ corresponding to the faces in $\bR$ are independent.
We say that $R$ is a \emph{root} of $G$ if it is both relatively-free and relatively-generating, that is, the rows of $\bd_d^{G}$ corresponding to the faces in $\bR$ form a basis of the rows.
\end{definition}

When $G$ is a graph, a subset of vertices $R$ is relatively-free if it contains at most one vertex per connected component, and relatively-generating if it contains at least one vertex per connected component. Accordingly, roots of $G$ are sets consisting of exactly one vertex per connected component.

We now explain how this generalizes to higher dimensions.
First, it is easy to see that $R$ is relatively-free if and only if $\ds \im(\bd_{d}^{G})\cap \RR R = \{0\}$.
Geometrically, this means that $R$ is relatively-free if and only if $R$ contains no $G$-boundary. This generalizes the dimension 1 condition of containing at most one vertex per connected component of $G$.

Second,  it is easy to see that $R$ is relatively-generating if and only if for any $(d-1)$-face $s\in \bR$, there exists $r\in\RR R$ such that $s+r\in \im(\bd_{d}^{G})$. Geometrically, this means that $R$ is relatively-generating if and only if any elements not in $R$ forms a $G$-boundary with $R$. This generalizes the dimension 1 condition of containing at least one vertex per connected component.

 
\begin{example}
Let $G$ be a 2-dimensional complex such that any 1-cycle is a boundary (for instance, a triangulated disc, sphere, or projective plane). In this case, a set $R$ of edges is relatively-free if and only if it does not contain any 1-cycle, that is, if $R$ is a forest of the 1-skeleton of $G$ (its underlying graph). 
Moreover a set $R$ of edges is relatively-generating if any additional edge creates a 1-cycle with $R$, that is, if $R$ connects any pair of vertices. Thus, a set $R$ is a root of $G$ if and only if it is a spanning tree of the 1-skeleton of $G$; see for instance Figure~\ref{fig:rootedforest}~(a) and~(b). 
\end{example}

\begin{example}
Let $G$ be a triangulated 2-dimensional torus. In this case, some 1-cycles are not $G$-boundaries, a set $R$ of edges can be relatively-free even if it contains some cycles. A set $R$ of edges is relatively-free if and only if cutting along the edges of $R$ does not disconnect the torus. Moreover, $R$ is relatively-generating if any additional edge creates a $G$-boundary (i.e. a contractible cycle) with $R$. Hence, $R$ is relatively-generating if and only if cutting along the edges of $R$ gives a disjoint union of triangulated polygons without interior vertices. Thus, $R$ is a root of $G$ if and only if cutting along $R$ gives a triangulated polygon without interior vertices.
\end{example}

\begin{figure}[h]
\includegraphics[width=.9\linewidth]{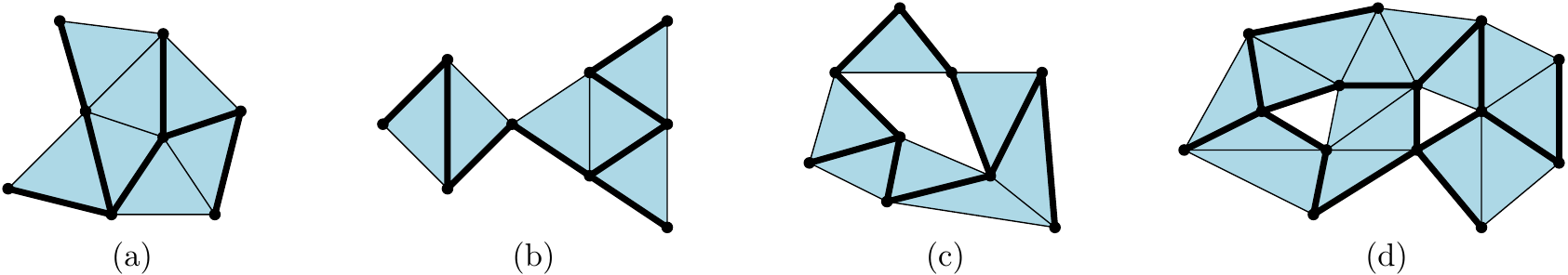}
\caption{Examples of rooted forests in dimension 2 (the root is indicated by thick lines). Observe that in (c) and (d) the root contains some $1$-cycles.}
\label{fig:rootedforest}
\end{figure}

\smallskip




\section{Rooted forests and their homological weights}\label{sec:rooted-forests}
\begin{definition} Let $G$ be a $d$-dimensional simplicial complex. A \emph{rooted forest} of $G$ is a pair $(F,R)$, where $F$ is a forest of $G$, and $R$ is a root of $F$\footnote{Formally, $R$ is a root of the simplicial complex generated by $F$: the subcomplex containing all the faces and subfaces of $F$.}. 
We call $(F,R)$ a \emph{rooted spanning forest} if moreover $F$ is a spanning forest of $G$.
\end{definition}

\begin{example}\label{ex:bipyramid}

  Figure~\ref{fig:rootedforest} shows four examples of rooted forests. In parts (a) and (b), the root is a graphical spanning tree of the one-dimensional faces.  On the other hand, the root in part (c) contains a cycle and the root in part (d) contains two cycles.   Figure~\ref{fig:subforest}~(a) shows the 
  2-dimensional complex $G$ which is the equatorial bipyramid.  Figure~\ref{fig:subforest} also shows a rooted forest of $G$ with three 2-faces. Again, in this case, the root is not an acyclic graph (any root must contain a non-contractible 1-cycle).


  
\end{example}
\begin{figure}[h]
\includegraphics[width=3in]{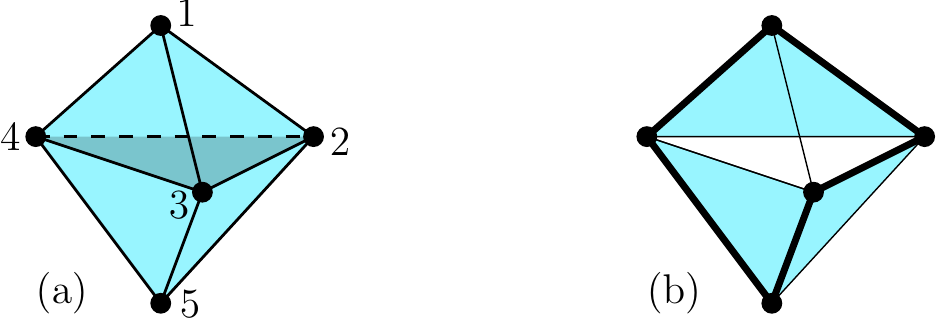}
\caption{(a) The equatorial bipyramid $G$ containing all $7$ triangles supported on the 1-skeleton. (b) A rooted forest $(F,R)$: the forest $F$ is made of the three shaded triangles while the root $R$ is made of the bold edges.}
\label{fig:subforest}
\end{figure}

Next we reproduce two lemmas which already appeared in~\cite{DKM3}.   We include the results here in order to give proofs obtained from our  linear algebra definitions.

\begin{lemma}\cite[Proposition 3.2]{DKM3}\label{lem:non-zero-det}
Let $G$ be a $d$-dimensional simplicial complex, let $F$ be a set of $d$-faces, and let $R$ be a set of $(d-1)$-faces.
Let $\bR=G_{d-1}\setminus R$ and  $\bd_{\bR,F}$ be the submatrix of $\bd_d^{G}$ obtained by keeping the rows corresponding to $\bR$ and the columns corresponding to $F$. Then, $(F,R)$ is a rooted forest if and only if $|F|=|\bR|$ and $\det(\bd_{\bR,F})\neq 0$.
\end{lemma}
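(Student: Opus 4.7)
The proof is a direct translation of the definition of rooted forest into matrix-theoretic conditions on the square submatrix $\bd_{\bR,F}$, via standard linear algebra. Write $M := \bd_d^{G}$ and let $M|_F$ denote the submatrix of $M$ keeping only the columns indexed by $F$; then $\bd_{\bR,F}$ is $M|_F$ with its rows further restricted to $\bR$. The matrix $M|_F$ is the $d$th incidence matrix of the subcomplex with $d$-faces $F$, so by Definition~\ref{def:spanning forest} the condition that $F$ is a forest is equivalent to the columns of $M|_F$ being linearly independent, i.e., $\rank(M|_F) = |F|$; and by Definition~\ref{def:rooting} the condition that $R$ is a root of this subcomplex is equivalent to the rows of $M|_F$ indexed by $\bR$ forming a basis of its row space.

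For the forward direction, assume $(F,R)$ is a rooted forest. From $F$ being a forest, the row space of $M|_F$ has dimension $|F|$, and from $R$ being a root the $\bR$-rows are a basis of it. This forces $|\bR|=|F|$ together with the independence of the $\bR$-rows, so the square matrix $\bd_{\bR,F}$ has full rank $|F|$ and $\det(\bd_{\bR,F})\neq 0$.

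For the converse, assume $|\bR|=|F|$ and $\det(\bd_{\bR,F})\neq 0$. Nonsingularity yields linear independence of both the columns and the rows of $\bd_{\bR,F}$. Column independence: the columns of $\bd_{\bR,F}$ are the restrictions to $\bR$ of the columns of $M|_F$, so any linear dependence among the full $F$-columns of $M|_F$ would restrict to one on $\bd_{\bR,F}$; hence the $F$-columns of $M|_F$ are themselves independent, so $F$ is a forest and $\rank(M|_F)=|F|$. Row independence: the $\bR$-rows of $M|_F$ are independent and their count $|\bR|=|F|$ matches the dimension of the row space, so they form a basis; hence $R$ is a root of the subcomplex generated by $F$, and $(F,R)$ is a rooted forest.

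There is no real obstacle — the proof is a careful linear-algebra unpacking of the two definitions. The only subtle point is the direction of the independence argument: linear independence passes from a submatrix back to its enlargement, not the other way around, which is exactly what allows us to recover the forest property of $F$ from the nonsingularity of $\bd_{\bR,F}$.
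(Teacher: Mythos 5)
Your proof is correct and follows essentially the same route as the paper's: translate ``forest'' into column independence of $\bd_d^{G}$ restricted to $F$, translate ``root'' into the $\bR$-rows forming a basis of the row space, and observe that nonsingularity of the square submatrix $\bd_{\bR,F}$ is exactly the conjunction of these two conditions (using that column dependence restricts to submatrices and that independence of rows of a submatrix lifts to the full matrix). The paper's proof is just a terser version of the same linear-algebra unpacking.
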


\begin{proof}
Suppose first that $(F,R)$ is a rooted forest. Since $F$ is a forest, the columns of $\bd_d^{F}$ are independent. Since $R$ is a root of $F$, the rows of $\bd_d^{F}$ corresponding to $\bR$ form a basis, so that $|F|=|\bR|$ and $\det(\bd_{\bR,F})\neq 0$. 
Suppose now that $|F|=|\bR|$ and $\det(\bd_{\bR,F})\neq 0$. Since $\det(\bd_{\bR,F})\neq 0$, the columns of $\bd_d^{G}$ corresponding to $F$ are independent, hence $F$ is a forest. Since $|F|=|\bR|$ and $\det(\bd_{\bR,F})\neq 0$, the rows of $\bd_d^{F}$ corresponding to $\bR$ form a basis, hence $R$ is a root of $F$. 
\end{proof}

Given a set $R$ of faces of a complex $G$, denote by $G/R$ the cell complex obtained by identifying all the vertices that belong to a face in $R$ (this can be thought of as contracting all the faces of $R$ to a single point). Observe that $\bd_{\bR,G}$ is the incidence matrix $\bd_d^{G/R}$ of the cell complex $G/R$. Moreover, the conditions $|F|=|\bR|$ and $\det(\bd_{\bR,F})\neq 0$ hold if and only if the rows of $\bd_{\bR,G}$ are independent (i.e. $R$ is relatively-generating for $G$) and the columns of $\bd_d^{G/R}$ corresponding to $F$ form a basis (i.e. $F/R$ is a spanning forest of $G/R$). Thus, Lemma~\ref{lem:non-zero-det} gives the following alternative characterization of rooted forests:  $(F,R)$ is a rooted forest of $G$ if and only if $R$ is relatively-generating for $G$ and $F/R$ is a spanning forest of $G/R$.


In order to state the next lemma, we need to introduce the homology groups associated to a rooted forest.
Thus far, we have considered formal linear combinations of faces of $G$ with coefficients in $\RR$. We will now consider the sets $\ZZ G_k$ which are the formal linear combinations of $k$-faces of $G$ with \emph{integer} coefficients. Thus $\ZZ G_k$ is a free abelian group isomorphic to $\ZZ^{|G_k|}$, and the map $\bd_k^G$ is a group homomorphism. We denote $\ds \im_\ZZ(\bd_k^{G})=\bd_k^{G}(\ZZ G_k)$ and $\ds \ker_\ZZ(\bd_k^{G})=\ker(\bd_{k}^{G})\cap \ZZ G_{k}$ the image and kernel of this group homomorphism. 

For all $k$, we have the following inclusion of subgroups:
$$\im_\ZZ(\bd_{k+1}^{G})\subseteq \bd_{k+1}^{G}(\RR G_{k+1})\cap \ZZ G_{k}\subseteq \ker_\ZZ(\bd_{k}^{G}).$$
The \emph{$k$th homology group} is the quotient group 
\begin{equation}\label{eq:def-Hd}
H_k^{G}:=\faktor{\ker_\ZZ(\bd_{k}^{G})}{\im_\ZZ(\bd_{k+1}^{G})}.
\end{equation}
It is easy to see that $H_k^{G}$ is a finite group if and only if $\ker(\bd_{k}^{G})=\im(\bd_{k+1}^{G})$. However it can happen that $H_k^{G}\neq 0$ even if $\ker(\bd_{k}^{G})=\im(\bd_{k+1}^{G})$.

\begin{example} For the triangulated projective plane represented in Figure~\ref{fig:rp2}, we have $\ker(\bd_{1}^{G})=\im(\bd_2^{G})$ (because any 1-cycle is a boundary), but $H_1^{G}\simeq \faktor{\ZZ}{2\ZZ}\neq 0$. This is because, while all 1-cycles of $G$ can be obtained as boundaries of combinations of 2-faces with real coefficients, some 1-cycles (such as $C=\{1,2\}+\{2,3\}-\{1,3\}$) can only be obtained with even multiplicity using integer coefficients. For example, if $S$ denotes the sum of all $2$-faces oriented clockwise in Figure~\ref{fig:rp2}, then $\bd_2(S)=2\cdot C$.
\end{example}
\begin{definition}
Let $G$ be a $d$-dimensional simplicial complex. Given a subset $F$ of $d$-faces and a subset $R$ of $(d-1)$-faces, consider the following group homomorphism
$$\Psi_{F,R}:
\begin{array}[t]{ccl} 
\ZZ F&\to& \faktor{\ZZ G_{d-1}}{\ZZ R},\\
x &\mapsto & \bd_d^{G}(x)+\ZZ R.
\end{array}
$$
The {\emph{relative homology groups}} for the pair $(F,R)$ are given by:
$$
\begin{array}[t]{ccl}
 H_d(F,R) & = & \ker \Psi_{F,R},\\
H_{d-1}(F,R) & = & \coker \Psi_{F,R}:=
\faktor{\left(\faktor{\ZZ G_{d-1}}{\ZZ R}\right)}{\im(\Psi_{F,R})}.
\end{array}
$$
\end{definition}

\begin{lemma}\cite[Proposition 3.5]{DKM3} \label{lem:value-det}
If $(F,R)$ is a rooted forest of $G$, then $\ds |\det(\bd_{\bR,F})|= |H_{d-1}(F,R)|$.
\end{lemma}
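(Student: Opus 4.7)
The plan is to identify $\Psi_{F,R}$ with the linear map represented by the matrix $\bd_{\bR,F}$ and then invoke the Smith normal form.

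First I would observe that $\ZZ G_{d-1}/\ZZ R$ is a free abelian group with natural basis indexed by $\bR = G_{d-1} \setminus R$, since $\ZZ R$ is the direct summand of $\ZZ G_{d-1}$ spanned by a subset of the standard basis. With respect to this basis on the target and the basis $F$ on the source, the matrix of $\Psi_{F,R}$ is precisely $\bd_{\bR,F}$: the image $\Psi_{F,R}(f) = \bd_d^G(f) + \ZZ R$ is obtained by dropping the $R$-coordinates from the column of $\bd_d^G$ indexed by $f$. By Lemma~\ref{lem:non-zero-det}, since $(F,R)$ is a rooted forest we have $|F| = |\bR|$ and $\det(\bd_{\bR,F}) \neq 0$, so $\bd_{\bR,F}$ is a nonsingular square integer matrix.

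Next I would apply the Smith normal form over $\ZZ$: write $\bd_{\bR,F} = U D V$ where $U, V \in \mathrm{GL}_{|F|}(\ZZ)$ and $D = \operatorname{diag}(d_1,\ldots,d_{|F|})$ has nonzero integer diagonal entries. Changing bases by $U$ and $V$ replaces $\Psi_{F,R}$ with the isomorphic homomorphism $\ZZ^{|F|} \to \ZZ^{|F|}$ given by $D$, whose cokernel is $\bigoplus_i \ZZ/d_i\ZZ$. This is a finite group of cardinality
$$\prod_i |d_i| \;=\; |\det(D)| \;=\; |\det(U)|\cdot|\det(D)|\cdot|\det(V)| \;=\; |\det(\bd_{\bR,F})|,$$
using $|\det(U)| = |\det(V)| = 1$. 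Since $H_{d-1}(F,R) = \coker \Psi_{F,R}$ by definition, the desired equality $|H_{d-1}(F,R)| = |\det(\bd_{\bR,F})|$ follows immediately. As a bonus observation, the same argument shows that $\Psi_{F,R}$ is injective, so $H_d(F,R) = 0$, consistent with $F$ being a forest.

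There is essentially no obstacle here. The content is entirely the translation between the homological description of $H_{d-1}(F,R)$ and the integer-matrix language, combined with the classical fact that the cokernel of a nonsingular square integer matrix is finite of order equal to the absolute value of the determinant. The only point requiring care is identifying the basis of $\ZZ G_{d-1}/\ZZ R$ with $\bR$ so that the matrix of $\Psi_{F,R}$ really is $\bd_{\bR,F}$ (as opposed to some row/column-permuted version, which would not affect $|\det|$ anyway).
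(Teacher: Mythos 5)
Your proof is correct and follows essentially the same route as the paper: identify the matrix of $\Psi_{F,R}$ with $\bd_{\bR,F}$ under the natural basis of $\faktor{\ZZ G_{d-1}}{\ZZ R}$ indexed by $\bR$, and then use the classical fact that the cokernel of a nonsingular square integer matrix has order $|\det|$ (the paper simply cites this fact, whereas you derive it via Smith normal form, but the argument is the same).
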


\begin{proof} Recall that for any non-singular integer matrix $M$ of dimension $n\times n$ one has
$\ds \left|\det(M)\right|=\left|\faktor{\ZZ^{n}}{L}\right|$ where $L$ is the subgroup of $\ZZ^n$ generated by the columns of $M$. 
Thus $\left|\det(\bd_{\bR,F})\right|=\left|\faktor{\ZZ \bR}{L_{F,R}}\right|$, where $\bd_{\bR,F}$ is the subgroup of $\ZZ \bR$ generated by the columns of $L_{F,R}$. Moreover, the isomorphism
$\ZZ \bR\simeq \faktor{\ZZ G_{d-1}}{\ZZ R}$ gives $L_{F,R}\simeq \im(\Psi_{F,R})$. Thus, 
$$\ds \left|\det(\bd_{\bR,F})\right|=\bigg|\faktor{\ZZ \bR}{L_{F,R}}\bigg|=\bigg|\faktor{\left(\faktor{\ZZ G_{d-1}}{\ZZ R}\right)}{\im(\Psi_{F,R})}\bigg|= \left|H_{d-1}(F,R)\right|.$$\\[-.5cm]
\end{proof}


\begin{remark} 
Readers familiar with algebraic topology will recognize the groups $H_d(F,R)$ and $H_{d-1}(F,R)$ as the relative homology groups of the complexes $F$ and $R$. In the case $|\bR|=|F|$ one has $\det(\bd_{\bR,F})\neq 0$ if and only if $H_d(F,R)=0$. Thus Lemma~\ref{lem:non-zero-det} can be restated as: $(F,R)$ is a rooted forest if and only if $|\bR|=|F|$ and $H_d(F,R)=0$.
\end{remark}

\smallskip

\section{Counting rooted forests}\label{sec:main-results}
In this section we establish a determinantal formula, generalizing~\eqref{eq:forest-graph}, for the generating function of rooted forests enumerated by size.

The \emph{Laplacian matrix} of a simplicial complex $G$ of dimension $d$ is the matrix 
$$L_G= \bd_d^{G}\cdot (\bd_d^{G})^T.$$ 
The rows and columns of $L_G$ are indexed by $(d-1)$-faces. 
For any $(d-1)$-face $r$, the entry $L_{G}[r,r]$ is equal to the number of $d$-faces containing $r$. 
Moreover, for any $(d-1)$-face $s\neq r$, the entry $L_{G}[r,s]$ is 0 if $r$ and $s$ are not incident to a common $d$-face, and is $(-1)^{i+j}$ if there exists a $d$-face $f=\{v_0,\ldots,v_d\}$ with $v_0<\cdots <v_d$ such that $r=f\setminus\{v_i\}$ and $s=f\setminus\{v_j\}$.

\begin{theorem}\label{thm:main} 
For a $d$-dimensional simplicial complex $G$, the characteristic polynomial of the Laplacian matrix $L_G$ gives a generating function for the rooted forest of $G$. More precisely,
\begin{equation}\label{eq:forest-complex}
\sum_{(F,R) \textrm{ rooted forest of }G} |H_{d-1}(F,R)|^2 x^{|R|}= \det(L_G+x\cdot \Id),
\end{equation}
where $\Id$ is the identity matrix of dimension $|G_{d-1}|$.

Formula~\eqref{eq:forest-complex} can also be enriched by attributing weights to the faces in the forest and its root:
if $\{x_r\}_{r\in G_{d-1}}$ and $\{y_f\}_{f\in G_{d}}$ are indeterminates indexed by the $(d-1)$-faces and $d$-faces of $G$ respectively, then
\begin{equation}\label{eq:forest-complex2}
\sum_{(F,R) \textrm{ rooted forest of }G} |H_{d-1}(F,R)|^2 \prod_{r\in R}x_r\prod_{f\in F}y_f= \det(L_G^y+X),
\end{equation}
where $L_G^y=\bd_d^{G}\cdot Y\cdot (\bd_d^{G})^T$, and where $X$ (resp. $Y$) is the diagonal matrix whose rows and columns are indexed by $G_{d-1}$ (resp. $G_{d}$), and whose diagonal entry in the row indexed by $r$ is $x_r$ (resp. indexed by $f$ is $y_f$).
\end{theorem}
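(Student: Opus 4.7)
The plan is to reduce the identity to a single application of the generalized Cauchy--Binet formula, plugging in the two lemmas from Section~\ref{sec:rooted-forests} at the very end. I will prove the weighted form~\eqref{eq:forest-complex2} directly, since~\eqref{eq:forest-complex} is the specialization $x_r = x$, $y_f = 1$.

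First, I would write $L_G^y + X$ as $A\,D\,A^T$ for an explicit pair $(A,D)$. Let $A$ be the $|G_{d-1}| \times (|G_d| + |G_{d-1}|)$ block matrix
\[
A \;=\; \bigl[\,\bd_d^{G}\;\big|\;\Id\,\bigr],
\]
whose columns are indexed by $G_d \sqcup G_{d-1}$, and let $D$ be the diagonal matrix of size $|G_d|+|G_{d-1}|$ with entries $y_f$ for $f\in G_d$ and $x_r$ for $r\in G_{d-1}$. Then a block computation gives $A\,D\,A^T = \bd_d^{G}\,Y\,(\bd_d^{G})^T + X = L_G^y + X$.

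Next I would apply the Cauchy--Binet identity in the form
\[
\det(A\,D\,A^T) \;=\; \sum_{S} \Bigl(\prod_{s\in S} D_{s,s}\Bigr)\, \det(A_S)^2,
\]
where $S$ ranges over subsets of the column index set $G_d\sqcup G_{d-1}$ of size $|G_{d-1}|$ and $A_S$ is the corresponding square submatrix. Writing $S = F \sqcup R$ with $F\subseteq G_d$ and $R\subseteq G_{d-1}$, the diagonal weight factors as $\prod_{r\in R} x_r \prod_{f\in F} y_f$, and I claim that $\det(A_S)^2 = \det(\bd_{\bR,F})^2$. Indeed, the $R$-columns of $A$ are the standard basis vectors supported at the rows indexed by $R$; expanding $\det(A_S)$ by Laplace along these $|R|$ columns eliminates exactly the rows indexed by $R$, leaving $\pm \det(\bd_{\bR,F})$ on the remaining rows $\bR = G_{d-1}\setminus R$ and columns $F$ (this requires $|F| = |\bR|$; otherwise $\det(A_S) = 0$ because the $R$-columns force a zero minor). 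The overall sign is immaterial since the quantity is squared.

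Combining these observations,
\[
\det(L_G^y + X) \;=\; \sum_{\substack{F\subseteq G_d,\,R\subseteq G_{d-1}\\ |F| = |\bR|}} \det(\bd_{\bR,F})^2 \,\prod_{r\in R} x_r \prod_{f\in F} y_f.
\]
By Lemma~\ref{lem:non-zero-det}, the terms with $\det(\bd_{\bR,F}) \neq 0$ are exactly those for which $(F,R)$ is a rooted forest of $G$; the remaining terms contribute zero. By Lemma~\ref{lem:value-det}, for each such rooted forest $\det(\bd_{\bR,F})^2 = |H_{d-1}(F,R)|^2$. This yields~\eqref{eq:forest-complex2}, and specializing $x_r = x$, $y_f = 1$ gives~\eqref{eq:forest-complex}.

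There is no serious obstacle: all the algebraic topology has been absorbed into Lemmas~\ref{lem:non-zero-det} and~\ref{lem:value-det}, so the only point demanding care is the Laplace-expansion computation of $\det(A_S)$, which is routine once the columns of $\Id$ are identified as standard basis vectors.
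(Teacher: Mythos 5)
Your proof is correct and follows essentially the same route as the paper: the paper writes $L_G^y+X=M\cdot N^T$ with $M=[\bd_d^G Y\mid X]$ and $N=[\bd_d^G\mid \Id]$ and applies Binet--Cauchy, which is exactly your weighted form $\det(ADA^T)=\sum_S\det(D_S)\det(A_S)^2$ with the diagonal weights absorbed into one factor. The Laplace expansion along the identity columns and the final appeal to Lemmas~\ref{lem:non-zero-det} and~\ref{lem:value-det} match the paper's argument step for step.
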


\begin{remark} Equations \eqref{eq:forest-complex} and \eqref{eq:forest-complex2} are closely related to Equations (7) and (8) of \cite{Martin:Pseudodeterminants-tree-counts} giving a general formula for the determinant of the form $\det(M Y M^T+x\cdot \Id)$ were $M$ is an arbitrary rectangular matrix and $Y$ is a diagonal matrix. Indeed, \eqref{eq:forest-complex} and \eqref{eq:forest-complex2} can be obtained simply by combining Equations (7) and (8) of \cite{Martin:Pseudodeterminants-tree-counts} with  Lemmas~\ref{lem:non-zero-det} and~\ref{lem:value-det}. 
\end{remark}

\begin{proof} 
It suffices to prove~\eqref{eq:forest-complex2} since it implies~\eqref{eq:forest-complex} by setting $x_r=x$ for all $r$ and $y_f=1$ for all $f$.
We first define two matrices $M,N$ whose rows are indexed by $G_{d-1}$ and whose columns are indexed by $G_{d}\cup G_{d-1}$.
The matrix $M$ (resp. $N$) is the matrix obtained by concatenating the matrix $\bd_d^{G}\cdot Y$ (resp. $\bd_d^{G}$) with the matrix $X$ 
(resp. $\Id$). Observe that $L_G^y+X=M\cdot N^T$. Now, applying the Binet-Cauchy formula gives
 $$\det(L_G^y+X)=\det(M\cdot N^T) = \sum_{S\subseteq G_{d} \cup G_{d-1},~ |S|=|G_{d-1}|} \det(M_{S})\det(N_{S}),$$
where $M_S$ and $N_S$ are the submatrices of $M$ and $N$ obtained by selecting only the columns indexed by the set $S$.
Let us fix $S=F\cup R$ with $F\subseteq G_d$, $R\subseteq G_{d-1}$, and $|S|=|G_{d-1}|$. Let $\bd_{\bR,F}$ (resp. $\bd_{\bR,F}^y$) be the submatrix of $\bd_d^{G}$ (resp. $\bd_d^{G}\cdot Y$) obtained by keeping the rows corresponding to $\bR$ and the columns corresponding to $F$. By rearranging the rows of $M_S$ so that the bottom rows correspond to the elements in $R$ we obtain a matrix with upper-left block $\bd_{\bR,F}^y$, upper-right block 0, and lower-left block equal to the diagonal matrix $X_R$ (with rows and column indexed by $R$) with diagonal entry $x_r$ in the row indexed by $r\in R$. Thus, 
$$\det(M_S)=\pm \det(\bd_{\bR,F}^y)\prod_{r\in R}x_r=\pm \det(\bd_{\bR,F})\prod_{r\in R}x_r\prod_{f\in F}y_f,$$
where the second equality comes from the fact that $\bd_{\bR,F}^y=\bd_{\bR,F}\cdot Y_F$, where $Y_F$ is the diagonal matrix (with rows and columns indexed by $F$) with diagonal entry $y_f$ in the row indexed by $f\in F$. 
Since $N$ is obtained from $M$ by setting $x_r=1$ for all $r$ and $y_f=1$ for all $f$, we get $\det(N_S)=\pm \det(\bd_{\bR,F})$ (with the same sign as in $\det(M_S)$). Hence 
$$\det(M_S)\det(N_S)= {\det(\bd_{\bR,F})}^2\prod_{r\in R}x_r\prod_{f\in F}y_f,$$
and
$$\det(L_G^y+X)=\sum_{ F\subseteq G_d, ~R\subseteq G_{d-1},~|F|+|R|=|G_{d-1}|} {\det(\bd_{\bR,F})}^2\prod_{r\in R}x_r\prod_{f\in F}y_f.$$
By Lemma~\ref{lem:non-zero-det} only the pairs $(F,R)$ corresponding to rooted forests of $G$ contribute to the above sum, and for these pairs Lemma~\ref{lem:value-det} gives $\det(\bd_{\bR,F})^2=|H_{d-1}(F,R)|^2$. This gives~\eqref{eq:forest-complex2}.
\end{proof}

 Theorem~\ref{thm:main} implies the Simplicial Matrix Tree Theorem derived in~\cite{Adin, DKM, DKM2, DKM3, Kalai, Lyons} which generalizes the usual matrix-theorem~\eqref{eq:tree-graph} to higher dimension.
\begin{corollary}[Simplicial matrix-tree theorem]
Let $G$ be a simplicial complex and let $R$ be a root of $G$. Then 
$$\sum_{F \textrm{ spanning forest of } G}|H_{d-1}(F,R)|^2=\det(L_G^R),$$
where $L_G^R$ is the submatrix of $L_G$ obtained by deleting the rows and columns of $L_G$ corresponding to faces in $R$.
\end{corollary}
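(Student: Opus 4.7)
The plan is to establish this corollary directly via the Binet--Cauchy formula, mirroring the strategy used in the proof of Theorem~\ref{thm:main}. First I would observe that, letting $\bd_{\bR}$ denote the row submatrix of $\bd_d^{G}$ obtained by keeping only the rows indexed by $\bR := G_{d-1} \setminus R$, one has $L_G^R = \bd_{\bR} \cdot (\bd_{\bR})^T$; this is a direct computation from the definition of $L_G$. Applying the Binet--Cauchy formula then yields
\begin{equation*}
\det(L_G^R) \;=\; \sum_{F \subseteq G_d,\; |F| = |\bR|} \det(\bd_{\bR, F})^2.
\end{equation*}

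Next I would invoke Lemmas~\ref{lem:non-zero-det} and~\ref{lem:value-det}. By Lemma~\ref{lem:non-zero-det}, the nonzero terms in this sum are exactly those $F$ for which $(F, R)$ is a rooted forest of $G$, and by Lemma~\ref{lem:value-det}, each such $\det(\bd_{\bR, F})^2$ equals $|H_{d-1}(F, R)|^2$. So the sum reduces to $\sum_{F\,:\,(F,R)\text{ is a rooted forest of }G} |H_{d-1}(F, R)|^2$.

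The remaining step, which I expect to be the main obstacle, is to identify the set of $F$ such that $(F,R)$ is a rooted forest of $G$ with the set of spanning forests of $G$, under the hypothesis that $R$ is a root of $G$. Both inclusions follow from the linear-algebra characterizations of a root and a spanning forest. In one direction: since $R$ is relatively-generating for $G$ one has $|\bR| = \rank(\bd_d^{G})$, so any spanning forest $F$ automatically satisfies $|F| = |\bR|$; and since $R$ is relatively-free for $G$, the projection $\RR G_{d-1} \to \RR \bR$ killing the $R$-coordinates is injective on $\im(\bd_d^{G})$, so the columns of $\bd_{\bR,F}$ are linearly independent whenever the columns of $\bd_d^{F}$ are, giving $\det(\bd_{\bR,F}) \neq 0$. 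In the converse direction, if $(F, R)$ is a rooted forest then $F$ is a forest with $|F| = |\bR| = \rank(\bd_d^{G})$, so the columns of $\bd_d^{F}$ form an independent family of the maximal possible size in the column space of $\bd_d^{G}$, hence a basis, and $F$ is a spanning forest. Combining this identification with the reduction from the previous paragraph yields the claimed formula.
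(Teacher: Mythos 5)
Your proof is correct, but it takes a different route from the paper. The paper deduces the corollary from the already-established weighted identity~\eqref{eq:forest-complex2}: it extracts the coefficient of $\prod_{r\in R}x_r$ (which, by multilinearity of the determinant in the diagonal entries, produces the principal minor $\det(L_G^R)$ on the right-hand side) and justifies the match of summation sets with the one-line observation that $R$ is a root of $G$ if and only if it is a root of some, equivalently all, spanning forests of $G$. You instead give a direct, self-contained argument: you factor $L_G^R=\bd_{\bR}\cdot(\bd_{\bR})^T$, apply Binet--Cauchy to this reduced Laplacian, and then invoke Lemmas~\ref{lem:non-zero-det} and~\ref{lem:value-det} --- essentially re-running the proof of Theorem~\ref{thm:main} in the special case rather than citing it. What your approach buys is that it makes fully explicit the equivalence the paper only asserts, namely that for a root $R$ of $G$ the pairs $(F,R)$ that are rooted forests are exactly the pairs with $F$ a spanning forest; your two inclusions via the rank count $|F|=|\bR|=\rank(\bd_d^G)$ and the injectivity of the projection killing the $R$-coordinates on $\im(\bd_d^G)$ are exactly right. (One small attribution quibble: the equality $|\bR|=\rank(\bd_d^G)$ uses both root conditions --- relatively-generating gives $|\bR|\le\rank(\bd_d^G)$ and relatively-free gives the reverse inequality --- not relatively-generating alone; since $R$ is assumed to be a root this is harmless.) The cost of your route is only that it duplicates the Binet--Cauchy computation already done for the theorem.
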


\begin{proof} Note that $R$ is a root of $G$ if and only if it is the root of
some, equivalently all, spanning forests of $G$. Thus, extracting the coefficient of $\ds \prod_{r\in R}x_r$ in~\eqref{eq:forest-complex2} and setting $y_f=1$ for all $f$ gives the corollary.
\end{proof}

\begin{example}

  For the 2-dimensional complex $G$ represented in Figure~\ref{fig:subforest}, we have
$$\det(L_G+x\cdot \Id) = x^4(x+5)^3(x+3)^2 = x^9 + 21x^8 + 174x^7 + 710x^6 + 1425x^5 + 1125x^4.$$

  Moreover, for any rooted forest $(F,R)$ of $G$ we have $H_{1}(F,R)=\{0\}$. Thus Theorem~\ref{thm:main} gives
  $$\sum_{(F,R) \textrm{ rooted forest of }G} x^{|R|}=x^9 + 21x^8 + 174x^7 + 710x^6 + 1425x^5 + 1125x^4.$$

The complex has 1125 rooted spanning forests (which are in fact rooted spanning trees). There are 15 spanning forests of $G$ because one obtains a spanning forest by either removing the equator face $\{2,3,4\}$ and any other 2-face (6 possibilities), or keeping the equator face and removing a 2-face from the top and a 2-face from the bottom (9 possibilities). For each of these spanning trees, there are $75$ possible roots (which are the spanning trees of the 1-skeleton of $G$), hence a total of $1125= 15 \times 75$ rooted spanning forests.  This gives the coefficient of $x^4$.  

In general, the coefficient of $x^k$ gives the number of rooted forests with $9-k$ triangles (since for any rooted forest $|F|+|R|=|G_{d-1}|=9$).
\end{example}

\begin{example}
For the triangulation of the real projective plane represented in Figure~\ref{fig:rp2}, we have
$$\begin{array}{lll}\det(L_G+x\cdot \Id)&=&x^5(x+3)^4(x+3+ \sqrt{5})^3(x+3- \sqrt{5})^3\\
&=&x^{15} + 30x^{14} + 390x^{13} + 2880x^{12} + 13305x^{11} + 39906x^{10}\\&&+ 78040x^9 + 97320x^8 + 73440x^7 + 30240x^6 + 5184x^5.
\end{array}
$$
In this case, $G$ is itself a spanning forest (actually a spanning tree). The possible roots of $G$ correspond to the spanning trees of the 1-skeleton of $G$. The 1-skeleton of $G$ is the complete graph $K_6$, which has $6^4$ spanning trees, hence there are $6^4 = 1296$ possible roots for $G$, and they have cardinality 5. Finally, for each root $R$ of $G$, the homology group $H_{1}(G,R)$ has order 2. Accordingly the coefficient of $x^5$ is $2^2 \cdot 1296 = 5184$.
\end{example}

\begin{example}
Let $K_{n}^d$ be the complete complex of dimension $d$ on $n$ vertices. 
The eigenvalues of $L_{K_n^d}$ are 0 with multiplicity ${n-1\choose d-1}$ and $n$ with multiplicity ${ n-1 \choose d}$.
Thus, Theorem~\ref{thm:main} gives 
\begin{equation}\label{eq:complete-forests} 
\sum_{(F,R) \textrm{ rooted forest of }K_n^d}|H_{d-1}(F,R)|^2\, x^{|R|} = x^{n-1\choose d-1}(x+n)^{n-1 \choose d}. 
\end{equation}
Observe that some of the weights $|H_{d-1}(F,R)|$ are greater than 1. For instance, the triangulated projective plane $F$ represented in Figure~\ref{fig:rp2} is one of the spanning forests of $K_6^2$, and for any rooting $R$ of $F$ we have $|H_{1}(F,R)|=2$.
\end{example}

\begin{example}
Let $C_{n}^d$ be the hypercube on $2^n$ vertices in dimension $d$. It was shown in~\cite{DKM2} that the non-zero eigenvalues of $L_{C_n^d}$ are $2j$ for $j\in \{d,d+1,\ldots,n\}$ with multiplicity ${j-1 \choose d-1}{n \choose j}$. Thus, Theorem~\ref{thm:main} gives 
\begin{equation}\label{eq:hypercube-forests} 
\sum_{(F,R) \textrm{ rooted forest of }C_n^d}|H_{d-1}(F,R)|^2\, x^{|R|} = x^{{n\choose d-1}2^{n+1-d}}\prod_{j=d}^n \left(1+\frac{2j}{x}\right)^{{j-1 \choose d-1}{n \choose j}}.
\end{equation}
\end{example}

At present, both~\eqref{eq:complete-forests} and~\eqref{eq:hypercube-forests} have  combinatorial proofs only for $d=1$ (see~\cite{Prufer:Cayley,Bernardi}). Finding a combinatorial proof for $d\geq 2$ represents a major challenge in bijective combinatorics. The first difficulty is to interpret the term $|H_{d-1}(F,R)|$ on the left-hand side combinatorially. This is one of the motivations for the notion of fitting orientation presented in the next section.

\smallskip

\section{Orientations of rooted forests}\label{sec:orientations}
In this section we define \emph{fitting orientations} for rooted forests.
 Fitting orientations are a generalization of the \emph{standard orientation} of a rooted graphical forest in which each tree is oriented towards its root-vertex. Namely, this is the unique orientation of the edges of the forest such that every non-root vertex has outdegree 1.


\begin{definition}
Let $G$ be a simplicial complex of dimension $d$. A \emph{fitting orientation} of a rooted forest $(F,R)$ is a bijection $\Om$ between $\bR:=G_{d-1}\setminus R$ and $F$, such that each face $r\in \bR$ is mapped to a face $f\in F$ containing $r$. A \emph{bi-directed rooted forest} of $G$ is a triple $(F, R, \Om,\Om')$ such that $(F,R)$ is a rooted forest and $\Om,\Om'$ are fitting orientations.
\end{definition}

Observe that for a graph $G$, the unique fitting orientation of a rooted forest $(F,R)$ is the bijection $\Om$ which associates an incident vertex $v$ to each edge $e=\{u,v\}\in F$, where $v$ is the child of $u$ in the rooted forest. 
This unique fitting orientation naturally identifies with the standard orientation of $(F,R)$ defined above.

An important difference in dimension $d>1$, is that there can be several fitting orientations for a rooted forest. For instance, Figure~\ref{fig:rp2} shows the two fitting orientations for a rooted forest of the projective plane.
\begin{figure}[h]
\includegraphics[width=3.5in]{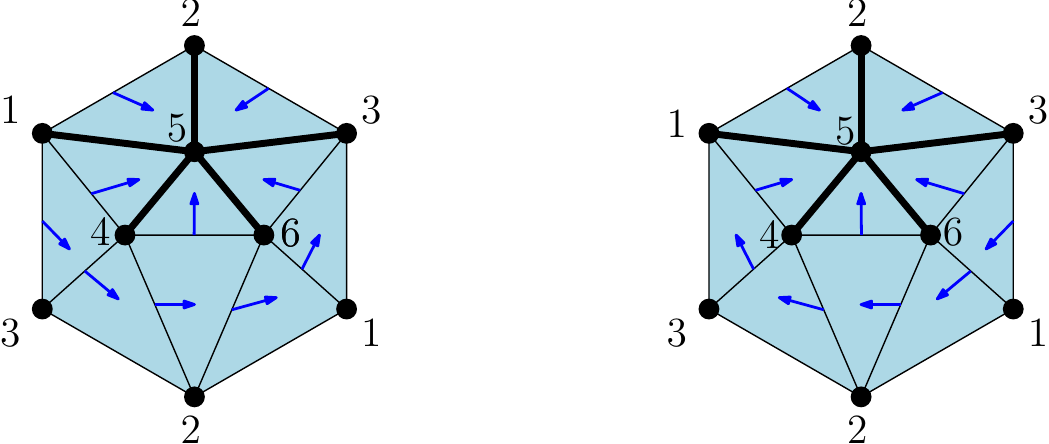}
\caption{The two fitting orientations of a rooted forest $(F,R)$ of a triangulated projective plane. Here $F$ consists of every triangle and $R$ is indicated in bold lines.}
\label{fig:rp2}
\end{figure}

\begin{remark}
Fitting orientations are a special case of  discrete vector fields from discrete Morse theory~\cite{Forman}. A
\emph{discrete vector field} is a collection of pairs of faces where each pair $(r,f)$ is such that:
(1) $\dim(r) = \dim(f)-1$, (2) $r \subset f$, (3) every face of the complex is in at most one pair.
Fitting orientations are not in general discrete \emph{gradient} vector fields, as gradient vector fields are forbidden to contain \emph{closed paths of faces} (e.g. the bold pairings for the 5 triangles at the bottom of the projective plane example). In \cite{Forman:det-laplacian} Forman actually considers vector fields on graphs in relation to the graph Laplacian  with respect to a representation of the fundamental group of $G$.    

Discrete vector fields do not require that the face pairings are only between faces in the top two dimensions.
One natural extension of a fitting orientation is to recursively root and orient trees
down in dimension. For example, for the projective plane, the root is a graphical spanning tree, and this tree can be given a fitting orientation. Such recursive orientations are then more similar to the bijections
used in simplicial decomposition theorems for $f$-vector characterizations, see for example~\cite{Duval, Rstan2}.
\end{remark}

We now prove a lower bound for the number of fitting orientations of rooted forests.
\begin{proposition}\label{prop:number-orientations}
Let $(F,R)$ be a rooted forest of a simplicial complex $G$. The number of fitting orientations of $(F,R)$ is greater than or equal to the homological weight $|H_{d-1}(F,R)|$. In particular, any rooted forest admits a fitting orientation.
\end{proposition}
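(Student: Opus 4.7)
The plan is to obtain the inequality directly from the Leibniz expansion of the determinant $\det(\bd_{\bR,F})$, which by Lemma~\ref{lem:value-det} has absolute value equal to $|H_{d-1}(F,R)|$. The key observation is that the entries of $\bd_d^G$ are in $\{-1,0,+1\}$, with $\bd_d^G[r,f] \neq 0$ if and only if $r \subset f$. So when one expands
\[
\det(\bd_{\bR,F}) = \sum_{\sigma:\bR\to F \text{ bijection}} \sign(\sigma)\prod_{r\in\bR}\bd_d^G[r,\sigma(r)],
\]
the only bijections $\sigma$ producing a nonzero summand are exactly those with $r\subset\sigma(r)$ for every $r\in\bR$, i.e.\ the fitting orientations of $(F,R)$. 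For each such $\sigma$, the product $\prod_{r\in\bR}\bd_d^G[r,\sigma(r)]$ lies in $\{-1,+1\}$, so the Leibniz formula expresses $\det(\bd_{\bR,F})$ as a signed sum of $\pm 1$ over fitting orientations.

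From here the bound is immediate by the triangle inequality:
\[
|H_{d-1}(F,R)| = |\det(\bd_{\bR,F})| = \Bigl|\sum_{\Om \text{ fitting}} \pm 1\Bigr| \leq \#\{\text{fitting orientations of }(F,R)\}.
\]
For the ``in particular'' statement, observe that since $(F,R)$ is a rooted forest, Lemma~\ref{lem:non-zero-det} guarantees $\det(\bd_{\bR,F})\neq 0$, and since this determinant is an integer we have $|H_{d-1}(F,R)| \geq 1$; consequently there must exist at least one fitting orientation.

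I do not foresee a serious obstacle: the argument is essentially a signed-vs-unsigned counting estimate, and all the nontrivial content (identification of $|\det(\bd_{\bR,F})|$ with $|H_{d-1}(F,R)|$, and that $\det(\bd_{\bR,F})\neq 0$ characterises rooted forests) is already packaged in Lemmas~\ref{lem:non-zero-det} and~\ref{lem:value-det}. The only thing to be careful about is verifying that the nonzero entries of $\bd_d^G$ correspond precisely to the incidence relation $r\subset f$, which is built into the definition of the incidence matrix. A natural follow-up, which motivates the subsequent discussion in the section, is to ask when the inequality is strict and whether one can isolate a subclass of fitting orientations whose unsigned count equals $|H_{d-1}(F,R)|$.
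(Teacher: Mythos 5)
Your proof is correct and follows essentially the same route as the paper's: expand $\det(\bd_{\bR,F})$ via the Leibniz formula, note that the nonzero terms correspond exactly to fitting orientations with each term $\pm 1$, and apply the triangle inequality together with Lemma~\ref{lem:value-det}. Your explicit justification of the ``in particular'' clause via Lemma~\ref{lem:non-zero-det} is a small but welcome addition that the paper leaves implicit.
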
 


\begin{proof}
By Lemma~\ref{lem:value-det}, $|H_{d-1}(F,R)|=|\det(\bd_{\bR,F})|$, where $\bd_{\bR,F}$ is the submatrix of $\bd_d^{G}$ with rows corresponding to $\bR=G_{d-1}\setminus R$ and columns corresponding to $F$. Let us denote by $n=|F|=|\bR|$ the size of the matrix $\bd_{\bR,F}$, and by $\bd_{\bR,F}[i,j]$ its coefficient in row $i$ and column $j$. We have 
$$\det(\bd_{\bR,F})=\sum_{\pi\in S_n} \sign(\pi)\prod_{i=1}^n\bd_{\bR,F}[i,\pi(i)],$$
where the sum is over all the permutation of $[n]$ and $\sign(\pi)$ is the sign of the permutation $\pi$. Now we identify the permutations $\pi\in S_n$ with the bijections from $\bR$ to $F$ (via the ordering of faces in $\bR$ and $F$ by lexicographic order), and we get 
\begin{equation}\label{eq:sum-orient}
\det(\bd_{\bR,F})=\sum_{\Om:\bR\to F~bijection} \sign(\Om)\prod_{r\in \bR}\bd_{d}^{G}[r,\Om(r)],
\end{equation}
where $\sign(\Om)$ is the sign of the permutation $\pi$ identified to $\Om$, and $\bd_{r,f}$ is the coefficient of the boundary matrix $\bd_d^{G}$ corresponding to the $(d-1)$-face $r$ and $d$-face $f$. Moreover, since $\bd_{r,f}$ is 0 unless $r$ is contained in $f$, the only bijections $\Om$ contributing to the sum in~\eqref{eq:sum-orient} are the fitting orientations of $(F,R)$. This gives 
$$|H_{d-1}(F,R)|=\bigg|\sum_{\Om\in\mO} \sign(\Om)\prod_{r\in \bR}\bd_{d}^{G}[r,\Om(r)] \bigg|\leq \sum_{\Om\in\mO} \bigg|\sign(\Om)\prod_{r\in \bR}\bd_{d}^{G}[r,\Om(r)] \bigg|=|\mO|,$$
where $\mO$ is the set of fitting orientations of $(F,R)$.
\end{proof}

\begin{remark}
It is tempting to conjecture that the number of fitting orientations is equal to $|H_{d-1}(F,R)|$. This is however not the case as Figure~\ref{dunce} illustrates: the rooted forest is topologically a $3$-fold dunce cap and has 3 fitting orientations, whereas $|H_{1}(F,R)|=1$.
\end{remark}

\begin{figure}[h]
\includegraphics[width=\linewidth]{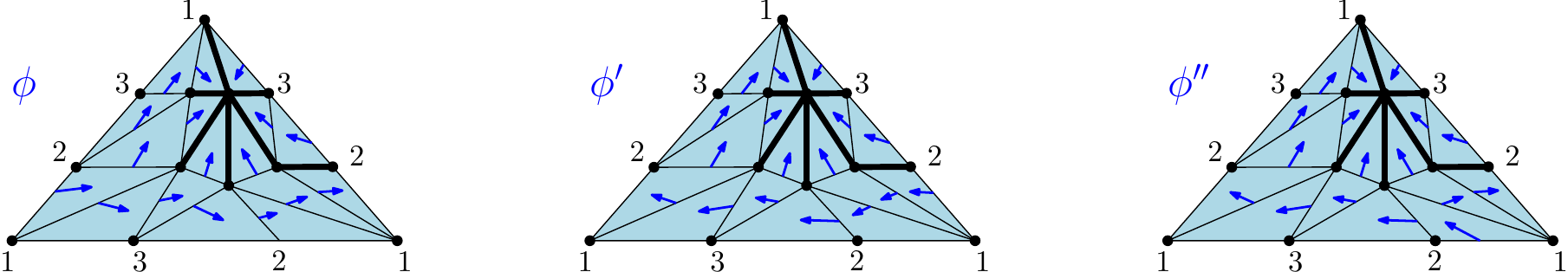} \label{dunce}
\caption{The three fitting orientations for the rooted forest $(F,R)$, where $F$ contains all the triangles (hence is a triangulation of the $3$-fold
 dunce cap) and $R$ is made of the bold edges.}
\end{figure}



We now define the sign of a fitting orientation.
Let $(F,R)$ be a rooted forest of a $d$-dimensional complex $G$, and let $n=|F|=|\bR|$. Let $\al$ be a linear ordering of $\bR$, and $\be$ a linear ordering of $F$. For any bijection $\phi:\bR\to F$, we associate the unique permutation $\pi_{\al,\be,\phi}\in S_n$ such that $\phi$ maps the $i$th face in $\bR$ (in the $\al$ order) to the $\pi_{\al,\be,\phi}(i)$th face in $F$ (in the $\be$ order).

For any fitting orientation $\Om$ of $(F,R)$, define 
$$\la_{\al,\be}(\Om)=\sign(\pi_{\al,\be,\Om})\prod_{r\in \bR} \bd_{d}^{G}[r,\Om(r)].$$
Define $\sign(\al,\be)\in\{-1,1\}$ as the sign of the determinant of the matrix $\bd_{\bR,F}^{\al,\be}$ obtained by reordering $\bd_{\bR,F}$ so that the rows are ordered according to $\al$ and the columns are ordered according to $\be$. 
Finally, define the \emph{sign} of the fitting orientation $\Om$ as 
\begin{equation}\label{eq:sign-orient}
\La_{\al,\be}(\Om)=\sign(\al,\be)\cdot\la_{\al,\be}(\Om).
\end{equation}

\begin{proposition}\label{prop:homology-as-orient}
For any rooted forest $(F,R)$ and any linear ordering $\al$ of $F$ and $\be$ of $\bR$,
\begin{equation}\label{eq:homology-as-orient}
\sum_{\Om\textrm{ fitting orientation of }(F,R)}\La_{\al,\be}(\Om)=|H_{d-1}(F,R)|.
\end{equation}
Moreover, $\La_{\al,\be}(\Om)\in\{-1,1\}$ does not depend on $(\al,\be)$, and we abbreviate it to $\La(\Om)$.
\end{proposition}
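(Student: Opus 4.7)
The plan is to view both statements as consequences of the expansion of $\det(\bd_{\bR,F}^{\al,\be})$ already established in the proof of Proposition~\ref{prop:number-orientations}, combined with Lemma~\ref{lem:value-det}. First I would observe that if $\Om$ is a fitting orientation, then for every $r\in\bR$ the face $\Om(r)$ contains $r$, hence $\bd_d^{G}[r,\Om(r)]\in\{-1,+1\}$. Consequently $\la_{\al,\be}(\Om)$ is the product of a sign with $\pm 1$ entries, so $\la_{\al,\be}(\Om)\in\{-1,+1\}$; since $\sign(\al,\be)\in\{-1,+1\}$ (it is nonzero precisely because $(F,R)$ is a rooted forest, by Lemma~\ref{lem:non-zero-det}), we get $\La_{\al,\be}(\Om)\in\{-1,+1\}$.

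Next, the exact same Leibniz-formula manipulation carried out in the proof of Proposition~\ref{prop:number-orientations}, but applied to the reordered matrix $\bd_{\bR,F}^{\al,\be}$ instead of $\bd_{\bR,F}$, gives
\begin{equation*}
\det(\bd_{\bR,F}^{\al,\be})=\sum_{\Om:\bR\to F\text{ bij.}}\sign(\pi_{\al,\be,\Om})\prod_{r\in\bR}\bd_d^{G}[r,\Om(r)]=\sum_{\Om\in\mO}\la_{\al,\be}(\Om),
\end{equation*}
since non-fitting bijections contribute zero. By Lemma~\ref{lem:value-det} one has $|\det(\bd_{\bR,F}^{\al,\be})|=|H_{d-1}(F,R)|$, and by definition $\sign(\al,\be)\det(\bd_{\bR,F}^{\al,\be})=|\det(\bd_{\bR,F}^{\al,\be})|$. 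Multiplying the displayed identity by $\sign(\al,\be)$ therefore yields
\begin{equation*}
\sum_{\Om\in\mO}\La_{\al,\be}(\Om)=\sign(\al,\be)\sum_{\Om\in\mO}\la_{\al,\be}(\Om)=|H_{d-1}(F,R)|,
\end{equation*}
which is~\eqref{eq:homology-as-orient}.

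It remains to check that $\La_{\al,\be}(\Om)$ is independent of $(\al,\be)$. The argument is a parity bookkeeping: fix any other orderings $\al'$, $\be'$ obtained from $\al$, $\be$ by permutations $\sigma$ of $\bR$ and $\tau$ of $F$. Then by construction $\pi_{\al',\be',\Om}=\tau^{-1}\circ\pi_{\al,\be,\Om}\circ\sigma$, hence $\sign(\pi_{\al',\be',\Om})=\sign(\sigma)\sign(\tau)\sign(\pi_{\al,\be,\Om})$. The product $\prod_{r\in\bR}\bd_d^{G}[r,\Om(r)]$ is manifestly independent of the orderings, so $\la_{\al',\be'}(\Om)=\sign(\sigma)\sign(\tau)\la_{\al,\be}(\Om)$. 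On the other hand, passing from $\bd_{\bR,F}^{\al,\be}$ to $\bd_{\bR,F}^{\al',\be'}$ amounts to permuting rows by $\sigma$ and columns by $\tau$, which multiplies the determinant by $\sign(\sigma)\sign(\tau)$, so $\sign(\al',\be')=\sign(\sigma)\sign(\tau)\sign(\al,\be)$. The two factors $\sign(\sigma)\sign(\tau)$ cancel when forming $\La_{\al',\be'}(\Om)=\sign(\al',\be')\la_{\al',\be'}(\Om)$, proving $\La_{\al',\be'}(\Om)=\La_{\al,\be}(\Om)$.

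No genuine obstacle arises; the main care needed is in the sign bookkeeping of the last paragraph, where one must be careful about whether $\sigma$ acts on row indices pre- or post-composition with $\pi_{\al,\be,\Om}$. Once that convention is pinned down both transformations produce the same sign, so the two factors cancel and $\La(\Om)$ is well-defined.
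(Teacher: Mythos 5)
Your proposal is correct and follows essentially the same route as the paper: expand $\det(\bd_{\bR,F}^{\al,\be})$ by the Leibniz formula, note that only fitting orientations contribute, invoke Lemma~\ref{lem:value-det} together with the definition of $\sign(\al,\be)$ to obtain $|H_{d-1}(F,R)|$, and verify independence of the orderings by tracking the factor $\sign(\sigma)\sign(\tau)$ through both $\la_{\al,\be}$ and $\sign(\al,\be)$. The only cosmetic addition is your explicit remark that each $\La_{\al,\be}(\Om)\in\{-1,+1\}$, which the paper leaves implicit.
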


\begin{remark}
Previously we showed that the number of fitting orientations of a rooted forest $(F,R)$ is at least the size of the homology group $H_{d-1}(F,R)$. Proposition~\ref{prop:homology-as-orient} shows how fitting orientations ``cancel down in pairs'' to precisely $|H_{d-1}(F,R)|$; see the related Question~\ref{question:1} below.
\end{remark}


\begin{proof}
Using Lemma~\ref{lem:value-det}, and expanding the determinant gives
$$\begin{array}{lll}
|H_{d-1}(F,R)|&=&\sign(\al,\be)\det(\bd_{\bR,F}^{\al,\be})\\
&=&\sign(\al,\be)\sum_{\pi\in S_n}\sign(\pi)\prod_{i=1}^n\bd_{\bR,F}^{\al,\be}[i,\pi(i)]\\
&=&\sign(\al,\be)\sum_{\phi:\bR \to F\textrm{ bijection}}\sign(\pi_{\al,\be,\phi})\prod_{r\in \bR}\bd_{r,\phi(r)}\\
&=&\sum_{\Om \textrm{ fitting orientation of }(F,R)}\La_{\al,\be}(\Om).
\end{array}
$$
We now show that $\La_{\al,\be}(\Om)$ does not depend on $\al$, $\be$. Let $\al'$ be a linear ordering of $\bR$, and let $\be'$ be a linear ordering of $F$. Let $\sigma$ be the permutation mapping $\al$ to $\al'$ and $\theta$ be the permutation mapping $\be$ to $\be'$. Then, for any fitting orientation $\phi:F\to\bR$, one has $\pi_{\al',\be',\Om}=\theta\circ \pi_{\al,\be,\Om}\circ \sigma^{-1}$, so that $\la_{\al',\be'}(\Om)=\sign(\sigma)\sign(\theta)\la_{\al,\be}(\Om)$. Moreover, $\sign(\al',\be')=\sign(\sigma)\sign(\theta)\sign(\al,\be)$, hence $\La_{\al',\be'}(\Om)=\La_{\al,\be}(\Om)$.
\end{proof}


Using Proposition~\ref{prop:homology-as-orient} one can reinterpret Theorem~\ref{thm:main} in terms of \emph{bi-directed rooted forests}: in place of a homologically weighted enumeration of rooted forest, one gets a signed enumeration of bi-directed rooted forests. 

\begin{theorem}\label{thm:counting-oriented-forests}
Let $G$ be a $d$-dimensional simplicial complex, and let $L_G$ be its Laplacian matrix. Then, 
\begin{equation}\label{eq:directed-forest}
\sum_{(F,R,\Om,\Om')\textrm{ bi-directed rooted forest}}\La(\Om)\La(\Om')\,x^{|R|}=\det(L_G+x\cdot \Id).
\end{equation}
More generally, given indeterminates $\{w_{r,f}\}_{r\in G_{d-1},f\in G_d}$, define the \emph{weighted Laplacian matrix} as $L_{G,w}=\bd_d^{G,w}\cdot (\bd_d^{G})^T$, where $\bd^{G,w}_d$ is the matrix obtained from $\bd^{G}_d$ by multiplying by $w_{r,f}$ the entry corresponding to the faces $r$ and $f$. Then, for any indeterminates $\{x_{r}\}_{r\in G_{d-1}}$,
\begin{equation}\label{eq:directed-forest2}
\sum_{(F,R,\Om,\Om')\textrm{ bi-directed rooted forest}}\La(\Om)\La(\Om')\left(\prod_{r\in \bR}w_{r,\Om(r)}\right) \left(\prod_{r\in R}x_r\right)=\det(L_{G,w}+X),
\end{equation}
where $X$ is the diagonal matrix whose rows and columns are indexed by $G_{d-1}$, and whose diagonal entry in the row indexed by $r$ is $x_r$.
\end{theorem}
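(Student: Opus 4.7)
The plan is to combine the Binet--Cauchy argument from the proof of Theorem~\ref{thm:main} with the signed expression of $|H_{d-1}(F,R)|$ in terms of fitting orientations from Proposition~\ref{prop:homology-as-orient}. Identity~\eqref{eq:directed-forest} is the easy case: from Proposition~\ref{prop:homology-as-orient},
$$|H_{d-1}(F,R)|^2 = \Bigl(\sum_{\Om} \La(\Om)\Bigr) \Bigl(\sum_{\Om'} \La(\Om')\Bigr) = \sum_{\Om,\Om'} \La(\Om)\La(\Om'),$$
where both sums run over fitting orientations of $(F,R)$. Substituting this into~\eqref{eq:forest-complex} of Theorem~\ref{thm:main} immediately yields~\eqref{eq:directed-forest}.

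For the weighted identity~\eqref{eq:directed-forest2} a naive substitution does not work, because the weight $w_{r,f}$ is attached to a specific incidence pair and must therefore couple with one orientation. So I would redo the Binet--Cauchy derivation of Theorem~\ref{thm:main} with $M = [\bd_d^{G,w} \mid X]$ and $N = [\bd_d^{G} \mid \Id]$, both indexed by $G_{d-1} \times (G_d \sqcup G_{d-1})$, so that $MN^T = L_{G,w} + X$. Exactly as in the proof of Theorem~\ref{thm:main}, Binet--Cauchy restricts the sum over $|G_{d-1}|$-element column sets $S = F \sqcup R$ to those for which $(F,R)$ is a rooted forest, and after rearranging rows produces $\det(M_S) = \pm \det(\bd^w_{\bR,F}) \prod_{r\in R} x_r$ and $\det(N_S) = \pm \det(\bd_{\bR,F})$ with identical signs, so that the product contributes
$$\det(\bd^w_{\bR,F}) \det(\bd_{\bR,F}) \prod_{r\in R} x_r.$$

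Next, I would expand both $n\times n$ determinants via the Leibniz formula, identifying permutations with bijections $\bR\to F$ through the lexicographic orderings $\al$ of $\bR$ and $\be$ of $F$. Only bijections that are fitting orientations survive, because $\bd_d^{G}[r,f]=0$ unless $r\subset f$. For such $\Om,\Om'$, using $\sign(\pi_{\al,\be,\Om})=\sign(\Om)$ and $\sign(\al,\be)^2=1$, one gets
$$\sign(\Om)\sign(\Om')\prod_{r\in\bR}\bd_d^{G}[r,\Om(r)]\prod_{r\in\bR}\bd_d^{G}[r,\Om'(r)] = \La(\Om)\La(\Om'),$$
while the extra $w$-factors from $\det(\bd^w_{\bR,F})$ produce $\prod_{r\in\bR} w_{r,\Om(r)}$, attached only to $\Om$. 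Summing over rooted forests and pairs of orientations gives~\eqref{eq:directed-forest2}.

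The main obstacle is just the bookkeeping of signs, together with understanding the asymmetry of~\eqref{eq:directed-forest2}: because $L_{G,w}$ has $w$ on only one of the two copies of $\bd_d^{G}$, the weight $w_{r,\Om(r)}$ pairs with one of the two orientations (conventionally $\Om$) and not both. Once one accepts this asymmetry, the rest is a direct rerun of the proof of Theorem~\ref{thm:main} with the squared $\det(\bd_{\bR,F})$ replaced by the unsymmetric product $\det(\bd^w_{\bR,F})\det(\bd_{\bR,F})$, followed by the Leibniz expansion that converts each factor into a signed sum over fitting orientations.
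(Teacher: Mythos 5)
Your proposal is correct and follows essentially the same route as the paper: the weighted identity~\eqref{eq:directed-forest2} is proved by the same Binet--Cauchy argument with $M=[\bd_d^{G,w}\mid X]$ and $N=[\bd_d^{G}\mid \Id]$, followed by the Leibniz expansion of $\det(\bd^w_{\bR,F})\det(\bd_{\bR,F})$ into pairs of fitting orientations, with the $w$-weights correctly attached to only one of the two orientations. The only (harmless) deviation is that you derive the unweighted identity~\eqref{eq:directed-forest} directly from Theorem~\ref{thm:main} and Proposition~\ref{prop:homology-as-orient}, whereas the paper obtains it as the specialization $w_{r,f}=1$, $x_r=x$ of~\eqref{eq:directed-forest2}.
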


\begin{remark} 
Note that in the special case where $w_{r,f}=y_f$ for all $r\subset f$,~\eqref{eq:directed-forest2} gives 
$$\sum_{(F,R,\Om,\Om')\textrm{ bi-directed rooted forest}}\La(\Om)\La(\Om')\prod_{r\in R} x_{r}\prod_{r\in \bR} y_{f}=\det(L_{G,w}+X)=\det(L_{G}^y+X),$$
which implies~\eqref{eq:forest-complex2} via Proposition~\ref{prop:homology-as-orient}.
Another special case of \eqref{eq:directed-forest2} corresponds to setting $w_{r,f}\in \{0,1\}$ for all $r\subseteq f$. This has the effect of restricting which pairs of incident faces are allowed in the fitting orientations. In this case, the matrix $L_{G,w}$ can be thought as the Laplacian for a \emph{directed} simplicial complex (the case $d=1$ gives the Laplacian of a directed graph). 
For general indeterminates $\{w_{r,f}\}$, \eqref{eq:directed-forest2} generalizes to higher dimensions the \emph{forest extension} of the matrix-tree theorem for \emph{weighted directed graphs}.
\end{remark}

\begin{proof}[Proof of Theorem~\ref{thm:counting-oriented-forests}]
This is similar to the proof of Theorem~\ref{thm:main}.
Define two matrices $M,N$ whose rows are indexed by $G_{d-1}$ and whose columns are indexed by $G_{d}\cup G_{d-1}$: the matrix $M$ (resp. $N$) is obtained by concatenating the matrix $\bd^{G,w}_d$ (resp. $\bd_d^{G}$) with the matrix $X$ (resp. $\Id$). Observe that $L_{G,w}+X=M\cdot N^T$.
The Binet-Cauchy formula gives
$$\det(L_{G,w}+X)=\det(M\cdot N^T) = \sum_{S\subseteq G_{d} \cup G_{d-1},\atop |S|=|G_{d-1}|} \det(M_{S})\det(N_{S}),$$
where $M_S$ and $N_S$ are the submatrices of $M$ and $N$ obtained by selecting only the columns indexed by the set $S$.
Let us fix $S=F\cup R$ with $F\subseteq G_d$, $R\subseteq G_{d-1}$, and $|S|=|G_{d-1}|$. 
It is easy to see that $\det(M_S)=\pm \det(\bd_{\bR,F}^w)\prod_{r\in R}x_r$, where $\bd_{\bR,F}^w$ is the submatrix of $\bd_d^{G,w}$ with rows corresponding to $\bR$ and columns corresponding to $F$. By expanding the determinant we get
$$\begin{array}{lll}\ds\det(M_S)&=&\ds \pm\sum_{\Om\textrm{ fitting orientation of }(F,R)}\la_{\al,\be}(\Om)\prod_{r\in \bR} w_{r,\Om(r)}\prod_{r\in R}x_r\\
&=&\ds \pm\sum_{\Om\textrm{ fitting orientation of }(F,R)}\La(\Om)\left(\prod_{r\in \bR}w_{r,\Om(r)}\right) \left(\prod_{r\in R}x_r\right),
\end{array}
$$
where $\al$ and $\be$ are the lexicographic order for $\bR$ and $F$.
Now since $N_S$ is obtained from $M_S$ by setting $x_r=1$ and $w_{r,f}=1$ for all $f,r$, we have
$$\det(M_{S})\det(N_{S})=\sum_{\Om,\Om'\textrm{ fitting orientations of }(F,R)}\La(\Om)\La(\Om')\left(\prod_{r \in \bR}w_{r,\Om(r)}\right) \left(\prod_{r\in R}x_r\right)$$
if $(F,R)$ is a rooted forest, and 0 otherwise (because in this case $\det(N_S)=\det(\bd_{\bR,F})=0$ by Lemma~\ref{lem:non-zero-det}).
This completes the proof of~\eqref{eq:directed-forest2}, hence also of~\eqref{eq:directed-forest}.
\end{proof}

We now give another, more natural, combinatorial expression for the sign of a bi-directed forest. Given a bi-directed forest $(F,R,\Om,\Om')$ we consider the permutation $\theta:=\Om^{-1}\circ\Om'$ of $\bR$. The fixed points of $\theta$ are the $(d-1)$-faces $r\in \bR$ for which the two orientations coincide: $\Om(r)=\Om'(r)$. The other cycles $(r_1,r_2,\ldots,r_k)$ of $\theta$ can be interpreted geometrically as encoding a closed path on the faces in $\bR$ and $F$ of the form $\ds r_1,f_1,r_2,f_2,\ldots,r_k,f_k,r_{k+1}=r_1,$ 
where $\Om'(r_i)=f_i=\Om(r_{i+1})$ for all $i\in\{1\ldots k\}$.
Such a cycle of $\theta$ is called \emph{oriented strip of $(\Om,\Om')$} if 
$$\ds 
\prod_{i=1}^k\bd_{r_i,f_i}\bd_{r_{i+1},f_i}=(-1)^{k}.
$$ 
See Figure~\ref{fig:dunce-strips} for an example. The name ``oriented strip'' reflects the fact that in dimension $d=2$, an oriented strip of $(\Om,\Om')$ encodes a closed path on the faces forming a cylinder, whereas the other (non-fixed point) cycles of $\theta$ encode closed paths forming a M\"obius strip.

\begin{figure}[h]
\includegraphics[width=4in]{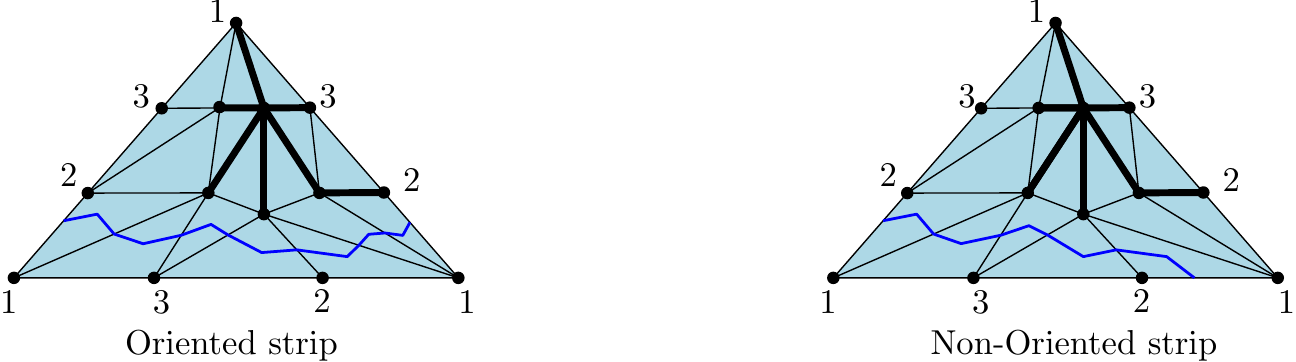}\label{fig:dunce-strips}
\caption{An oriented strip of $(\Om,\Om')$ and a non-oriented strip of $(\Om,\Om'')$, where $\Om$, $\Om'$, and $\Om''$ are the three fitting orientations of the rooted forest represented from left to right in Figure~\ref{dunce}.}
\end{figure}

\begin{proposition}\label{prop:sign-biorientation}
For any bi-directed forest $(F,R,\Om,\Om')$, 
$$\ds \La(\Om)\La(\Om')=(-1)^{\#\textrm{oriented strips of }(\Om,\Om')}.$$
This gives new expressions for \eqref{eq:directed-forest} and \eqref{eq:directed-forest2}. For instance,
$$\sum_{(F,R,\Om,\Om')\textrm{ bi-directed rooted forest}}(-1)^{\#\textrm{oriented strips of }(\Om,\Om')}x^{|R|}=\det(L_G+x\cdot \Id).$$
\end{proposition}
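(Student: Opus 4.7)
The plan is to compute $\La(\Om)\La(\Om')$ directly from the definition in~\eqref{eq:sign-orient} and reorganize the result according to the cycle decomposition of $\theta=\Om^{-1}\circ\Om'$. Fix linear orderings $\al$ of $\bR$ and $\be$ of $F$ so that, by Proposition~\ref{prop:homology-as-orient}, $\La(\Om)=\La_{\al,\be}(\Om)$ and $\La(\Om')=\La_{\al,\be}(\Om')$. Multiplying the two expressions, the factor $\sign(\al,\be)^2=1$ disappears and we obtain
$$\La(\Om)\La(\Om')=\sign(\pi_{\al,\be,\Om})\sign(\pi_{\al,\be,\Om'})\prod_{r\in\bR}\bd_{d}^{G}[r,\Om(r)]\,\bd_{d}^{G}[r,\Om'(r)].$$

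The first key step is to identify the product of signs with $\sign(\theta)$. Under the identification of $\bR$ with $[n]$ via $\al$, the permutation $\pi_{\al,\be,\Om}^{-1}\circ\pi_{\al,\be,\Om'}$ corresponds to $\Om^{-1}\circ\Om'=\theta$, so $\sign(\pi_{\al,\be,\Om})\sign(\pi_{\al,\be,\Om'})=\sign(\theta)$. This gives
$$\La(\Om)\La(\Om')=\sign(\theta)\prod_{r\in\bR}\bd_{d}^{G}[r,\Om(r)]\,\bd_{d}^{G}[r,\Om'(r)].$$

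The second step is to group the factors in the product according to the cycles of $\theta$. A fixed point $r$ (i.e.\ $\Om(r)=\Om'(r)$) contributes $\bd_{d}^{G}[r,\Om(r)]^2=1$ (nonzero since $r\subset\Om(r)$). For a cycle $(r_1,\ldots,r_k)$ of length $k\geq 2$, writing $f_i=\Om'(r_i)=\Om(r_{i+1})$ as in the statement, the factors corresponding to $r_1,\ldots,r_k$ combine to
$$\prod_{i=1}^{k}\bd_{d}^{G}[r_i,\Om(r_i)]\,\bd_{d}^{G}[r_i,\Om'(r_i)]=\prod_{i=1}^{k}\bd_{d}^{G}[r_i,f_i]\,\bd_{d}^{G}[r_{i+1},f_i],$$
which is $(-1)^k$ when the cycle is an oriented strip and $(-1)^{k-1}$ otherwise (there are only two possible values in $\{-1,+1\}$). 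Combining with the sign $(-1)^{k-1}$ of the cycle, the total contribution of a non-trivial cycle is $-1$ if it is an oriented strip and $+1$ otherwise. Multiplying over all cycles of $\theta$ gives $\La(\Om)\La(\Om')=(-1)^{\#\text{oriented strips of }(\Om,\Om')}$.

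The new expressions for~\eqref{eq:directed-forest} and~\eqref{eq:directed-forest2} then follow immediately by substituting this identity into Theorem~\ref{thm:counting-oriented-forests}. The main subtlety is the bookkeeping in the cycle-by-cycle analysis---in particular, correctly pairing the boundary entries along a cycle and verifying that the sign of a $k$-cycle matches with the parity of the product $\prod\bd[r_i,f_i]\bd[r_{i+1},f_i]$. Beyond that, no deep argument is required.
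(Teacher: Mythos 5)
Your proof is correct and follows essentially the same route as the paper's: expand $\La(\Om)\La(\Om')$ from the definition, identify the product of signs with $\sign(\Om^{-1}\circ\Om')$, and group the boundary-entry factors cycle by cycle, so each oriented strip contributes $-1$ and every other cycle (including fixed points) contributes $+1$. The only difference is cosmetic — you spell out the reindexing $\prod_i \bd[r_i,\Om(r_i)]\bd[r_i,\Om'(r_i)]=\prod_i\bd[r_i,f_i]\bd[r_{i+1},f_i]$ that the paper leaves implicit.
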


\begin{proof} Let $\al,\be$ be linear orderings of $\bR$ and $F$ respectively.
$$\begin{array}{lll}
\ds\La(\Om)\La(\Om')
=\la_{\al,\be}(\Om)\la_{\al,\be}(\Om')
&=&\ds\sign(\pi_{\al,\be,\Om})\sign(\pi_{\al,\be,\Om'})\prod_{r\in \bR}\bd_{r,\Om(r)}\bd_{r,\Om'(r)}\\
&=&\ds\sign(\Om^{-1}\circ\Om')\prod_{r\in \bR}\bd_{r,\Om(r)}\bd_{r,\Om'(r)}\\
&=&\ds \prod_{C \textrm{ cycle of }\Om^{-1}\circ\Om'}\left((-1)^{|C|-1}\prod_{r\in C}\bd_{r,\Om(r)}\bd_{r,\Om'(r)}\right).
\end{array}
$$
where the second identity uses $\Om^{-1}\circ\Om'=\pi_{\al,\be,\Om}\circ\pi_{\al,\be,\Om'}$. Moreover, the contribution of oriented strips of $\Om^{-1}\circ\Om'$ to the above product is~$-1$, and the contribution of the other cycles of $\Om^{-1}\circ\Om'$ is~$1$.
\end{proof}

We conclude this paper with some open questions.

\begin{question}\label{question:1} Is there a combinatorial way of defining a subset of the fitting orientations of a rooted forest $(F,R)$ which is equinumerous to $|H_{d-1}(F,R)|$?
One way to achieve this goal would be to define a (partial) matching of the fitting orientations of $(F,R)$ such that any matched pair of orientations $(\Om,\Om')$ satisfies $\La(\Om)\La(\Om')=-1$, and any unmatched orientation $\Om$ satisfies $\La(\Om)=1$. 
\end{question}

\begin{question}\label{question:2} Is there a combinatorial way of defining a subset of the pairs of fitting orientations $(\Om,\Om')$ of a rooted forest $(F,R)$ which is equinumerous to $|H_{d-1}(F,R)|^2$? Again, this could be defined in terms of a partial matching on the set of pairs $(\Om,\Om')$. 
\end{question}
Note that an answer to Question~\ref{question:1} would immediately give as answer to  Question~\ref{question:2}. In view of Proposition~\ref{prop:sign-biorientation}, Question~\ref{question:2} may appear more natural.

\begin{question} The notion of fitting orientations can be extended to pairs $(F,R)$ with $|F|=|\bR|$ which are not rooted forests. Is there a direct combinatorial proof of Lemma~\ref{lem:non-zero-det} in terms of fitting orientation? More precisely, when $(F,R)$ is not a rooted forest, we would like to find a \emph{sign reversing involution} implying
$$\sum_{\Om\textrm{ fitting orientation of }(F,R)}\la_{\al,\be}(\Om)=0.$$
The dimension 1 case is easy, but the higher dimensional cases seem more challenging.
\end{question}

\noindent {\bf Acknowledgments.} We thank Vic Reiner for pointing out several useful references. OB acknowledges the partial support provided by the NSF grant DMS-1400859.

\bibliographystyle{plain} 
\bibliography{biblio-simplicial-forests}

\end{document}